%
%
%
%
%
%
\documentclass[
 aip,
 sd,%
 amsmath,amssymb,
preprint,%
nofootinbib]
{revtex4-1}
\usepackage{graphicx}
\usepackage{dcolumn}
\usepackage{bm}
\usepackage{amsthm}
\newtheorem{theorem}{Theorem}

\begin{document}
\title[Macdonald's Theorem]{Macdonald's Theorem for Analytic Functions}

\author{ R.C. McPhedran,\\
 School of Physics, University of Sydney,\\
Sydney, NSW Australia 2006.}
  
\begin{abstract} 
A proof is reconstructed  for a useful theorem on the zeros of derivatives of analytic functions due to H. M. Macdonald, which appears to be now little known. The Theorem states that, if a function $f(z)$ is analytic inside a bounded region bounded by a contour on which the modulus of $f(z)$ is constant, then the number of zeros (counted according to multiplicity) of $f(z)$ and of its derivative in the region differ by unity.
\end{abstract}
\maketitle
\section{Introduction}

The well-known mathematician E. T. Whittaker wrote the Obituary of H.M. Macdonald, published in Nature in 1935\cite{whittaker}.
He described  Macdonald's life and contributions to physical topics, but his comment on a mathematical contribution is of paramount interest here:
\begin{quote}
In the paper on the zeros of Bessel functions[\cite{macdonald}], he gave the result since known as Macdonald's Theorem, that the number of zeros of a function $f(z)$ in the region bounded by a contour at each point of which $|f(z)|=$a constant, exceeds the number of zeros of the derived function $f'(z)$ in the same region by unity, the function $f(z)$ being supposed analytic in the region.
\end{quote}
In the form described by Whittaker, one would imagine that Macdonald's Theorem would be well cited, and would have found a range of uses in numerics and analytics. However, this is not the case, and the Theorem appears to be no longer at all well known.

One reason for this may be that, if one reads the source paper\cite{macdonald}, one does not find the Theorem stated in the generality of Whittaker's enunciation. In fact, Macdonald's treatment is  not at all general, but is couched entirely in terms of the eponymous function $K_n(z)$ and its derivative. Hence, there is an interest in reconstructing the Theorem in the generality of Whittaker's description, in the hope that it may find wider contemporary interest and application. This reconstruction will be found in the next section, and numerical examples with program details will be given in Section 3, which may be useful in pedagogy.
\section{Reconstructed Proof of Macdonald's Theorem}
\begin{theorem}[Macdonald, 1898; Whittaker, 1935] The number of zeros $N_z$ of a non-constant function $f(z)$ in the  bounded region inside a contour at each point of which $|f(z)|=C$, $C$ being a constant, exceeds the number of zeros of the derived function $f'(z)$ in the same region by unity, the function $f(z)$ being supposed analytic in the region. Zeros are counted according to their multiplicity.
\end{theorem}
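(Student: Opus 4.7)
The plan is to translate each zero-count into a change of argument around $\partial D$ via the argument principle, use the boundary constraint $|f|=C$ to relate $\arg f'$ to $\arg f$ along the contour, and invoke Hopf's Umlaufsatz for the tangent to recover the $+1$. Parameterize the contour counterclockwise as $z(t)$, $t\in[0,T]$. Since $|f|=C>0$, $f$ is nonzero on $\partial D$; I assume for the moment that $f'$ also has no zeros on $\partial D$. The argument principle then gives
\begin{equation*}
N_z = \frac{1}{2\pi}\Delta_{\partial D}\arg f, \qquad N_{z'} = \frac{1}{2\pi}\Delta_{\partial D}\arg f',
\end{equation*}
so the claim reduces to $\Delta\arg f - \Delta\arg f' = 2\pi$.

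To relate these boundary changes-of-argument, write $f(z(t))=Ce^{i\phi(t)}$ with $\phi$ a continuous lift of $\arg f$, and differentiate in $t$:
\begin{equation*}
f'(z(t))\,z'(t) = i\,\phi'(t)\,f(z(t)).
\end{equation*}
Thus $f'z'/f$ is purely imaginary on $\partial D$, and taking arguments gives
\begin{equation*}
\arg f'(z) + \arg z'(t) - \arg f(z) \equiv \tfrac{\pi}{2} + \arg\phi'(t)\pmod{2\pi},
\end{equation*}
where $\arg\phi'\in\{0,\pi\}$ since $\phi'$ is real. Now $\phi'$ vanishes only when $f'(z(t))z'(t)=0$; since $z'\neq 0$ on a smooth contour and $f'\neq 0$ on $\partial D$ by assumption, $\phi'$ never vanishes. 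Moreover, the minimum-modulus principle applied to $1/f$ forces $N_z\geq 1$ for a non-constant $f$ (else $|f|$ would be forced to be constant throughout $\overline{D}$), so $\int_0^T\phi'\,dt = 2\pi N_z > 0$ pins down $\phi'>0$ throughout. The displayed right-hand side is therefore a global constant on $\partial D$.

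Integrating around $\partial D$ annihilates this constant, leaving $\Delta\arg f - \Delta\arg f' = \Delta\arg z'$. By Hopf's Umlaufsatz, the total turning of the tangent to a positively oriented simple closed curve is $2\pi$, so $\Delta\arg z' = 2\pi$ and $N_z - N_{z'}=1$.

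The main obstacle is the standing assumption that $f'$ has no zeros on $\partial D$, equivalently that $C$ is a regular value of $|f|$ so the level set is smooth. When this fails, the level set can have cusps or self-intersections, $\phi'$ may change sign across the bad points, and the clean turning argument breaks down. The standard remedy is to perturb $C$ to a nearby regular value---available because the critical points of $|f|^2$ coincide with the zeros of $f'$ in $\overline{D}$, a finite set, so only finitely many critical values occur---apply the above argument to the perturbed smooth contour, and pass to the limit, using that interior zeros of $f$ and $f'$ sit at positive distance from a regular contour and so cannot cross it under small perturbations.
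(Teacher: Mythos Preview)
Your argument is correct and is a genuinely different proof from the paper's. The paper proceeds by induction on $N_z$, tracking the morphology of the equimodular curves $|f|=\text{const}$: around each zero these curves are nested ovals, and as the modulus is raised two families must eventually touch at a \emph{double point}, which is precisely a zero of $f'$; iterating this merging process accounts for exactly $N_z-1$ critical points inside the final contour. Your route instead applies the argument principle to both $f$ and $f'$, uses the boundary relation $f'z'=i\phi' f$ forced by $|f|\equiv C$ to identify $\Delta\arg f-\Delta\arg f'$ with the tangent turning $\Delta\arg z'$, and then invokes Hopf's Umlaufsatz to obtain the $2\pi$. What your approach buys is a short, non-inductive computation using standard tools, with the regularity hypothesis $f'\neq 0$ on $\partial D$ made explicit; what the paper's approach buys is a vivid geometric picture of \emph{where} the zeros of $f'$ sit---at the saddle-type double points where level ovals coalesce---and it avoids appealing to the turning-number theorem. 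One remark on your final paragraph: the perturbation of $C$ needs a little care because lowering $C$ past a critical value can disconnect the sublevel set into several components, each of which satisfies the theorem separately; one must then check that for $C'$ sufficiently close to $C$ the relevant component is still unique (equivalently, that the singular boundary case is approached from \emph{above} in $C$, where the region stays connected and acquires the boundary critical point in its interior). The paper's informal treatment does not address this edge case at all, so your proof is in this respect more scrupulous.
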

\begin{proof}
The proof proceeds by induction, and concentrates on the morphology of the lines of constant modulus of $f(z)$ within the region.
We commence with the situation where all zeros are simple.

Consider first the case $N_z=1$, where there is exactly one zero in the region, say at $z_1$. Draw curves of constant modulus surrounding $z_1$, for which the modulus increases towards $C$. Then each point in the region lies on one of these curves, and only one. Indeed, if two  curves for a modulus $C_*<C$ touched inside the region, one centred on $z_1$ and the other on a zero exterior to the region, this would give a contradiction, since  equimodular lines for values intermediate between $C_*$ and $C$ would enclose both zeros, and thus leave the region. Also, a contour of constant modulus  containing no zero (and no pole) could be constructed from those for the exterior zero and those for $z_1$, in contradiction with the Maximum/Minimum Modulus Theorems.  There are thus no zeros of $f'(z)$ inside the region, as required.

Consider next the case $N_z=2$, for which there are zeros $z_1$ and $z_2$ lying inside the region. Construct equimodular lines centred on $z_1$ and $z_2$. As  the specified modulus increases, there must be a value $C_2$ at which the curves first touch, at some point intermediate between $z_1$ and $z_2$, and called by Macdonald a double point. This double point has $f'(z)=0$. Equimodular lines for modulii intermediate between $C_2$ and $C$ enclose both zeros $z_1$ and $z_2$,  and can intersect no other equimodular lines in the region, by the argument given for $N_z=1$.

Consider next the general case $N_z=m>2$. We know that any system of zeros of order $m_s\ge 1$ smaller than $m$ lying inside the contour $|f(z)|=C$ must have $m_s-1$ zeros of $f'(z)$ lying inside the region, and enclosed by equimodular lines surrounding all the $m_s$ zeros. We then take $m_s=m-1$ and consider the equimodular contours around $z_m$ and the system of $m-1$ zeros.
By varying the modulus value, we will arrive at a value for which the two systems of contours touch at a double point, and thereafter enclose all $m$ zeros until $C$ is reached. We thus have added one zero of $f'(z)$ to the $m-2$ which existed within the system of $m-1$ zeros, giving $m-1$ zeros of $f'(z)$ in all (again relying on the argument given for $N_z=1$). 

Next, consider the case  where some or all zeros have multiple order. We deal with this by remarking that a zero of order $N_m$
may be treated as an arbitrary  cluster of  $N_m$ simple zeros separated by distances much smaller than their distances to all other zeros, as far as the behaviour of lines of constant modulus external to the region of this localised cluster is concerned. The localised cluster has within it $N_m-1$ derivative zeros, just as the zero of multiple order has a derivative zero of order $N_m-1$. The proof then proceeds as in the case where all zeros are simple.

\end{proof}

\section{Numerical Examples}

Numerical examples illustrating the argument of Section 2 may be constructed easily in Mathematica, or other systems combining graphics and analytics. The Mathematica code given below constructs Nzin complex numbers, by choosing random real modulii and arguments for each, and uses these to construct the function giving the value of $f(z)$ at $z=x+i y$.

\begin{quote}
Clear[ztabsxy]; 
ztabsxy[Nzin\_] := \\
 Module[\{tabinz, l, tabout, taboutxy\}, \\
  tabinz = RandomReal[\{0, 1\}, Nzin]; \\
  tabout = Table[\{tabinz[[l]], RandomReal[\{0, 2*Pi\}]\}, \{l, 1, Nzin\}];\\ 
  taboutxy = 
   Table[\{tabout[[l, 1]]*Cos[tabout[[l, 2]]], 
     tabout[[l, 1]]*Sin[tabout[[l, 2]]]\}, \{l, 1, 
     Nzin\}]]; \\
     
     Clear[fnval]; \\
fnval[x\_, y\_, Nzin\_, stabxy\_] := 
 Product[(x + I*y - (stabxy[[l, 1]] + I*stabxy[[l, 2]])), \{l, 1, 
   Nzin\}];
\end{quote}

The text at the top of Fig.\ref{fig2z} gives Mathematica commands for constructing contour plots for the case of two zeros. The contours comprise a set generated by a list, and three others which are chosen interactively. This should be done with a given set of complex points, so that the interactive contours can be adjusted for a given case. The aim is to choose the interactive contours to
illustrate the behaviour near double points. For the case shown, the double point corresponds to a logarithmic modulus between -2.39 and -2.415.

\begin{figure}[htb]
\includegraphics[width=6 in, angle=-90]{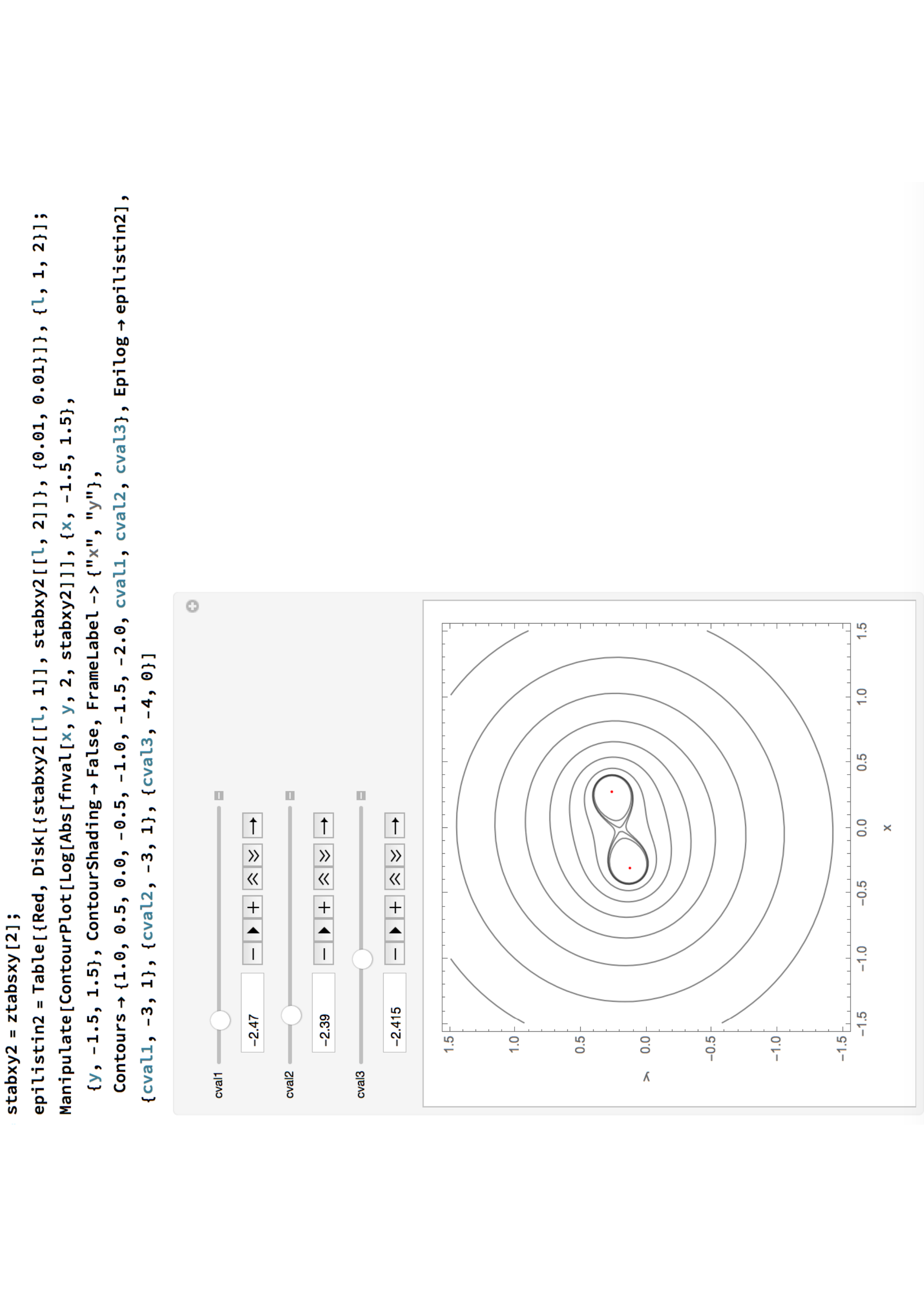}
\caption{Mathematica commands using Manipulate  and example with two zeros randomly chosen with modulus less than unity.The positions of the zeros are indicated by red dots.}
\label{fig2z}
\end{figure}

Examples are also given in Figs. \ref{fig3z}-\ref{fig5z} of three, four and five zeros randomly chosen. As the number of random zeros
increases, the function modulii of interest tend to decrease, which should be reflected in the range of fixed contours and the adjustment range of the interactive contours. The number of interactive contours also naturally needs to increase.
\begin{figure}[htb]
\includegraphics[width=6 in, angle=0]{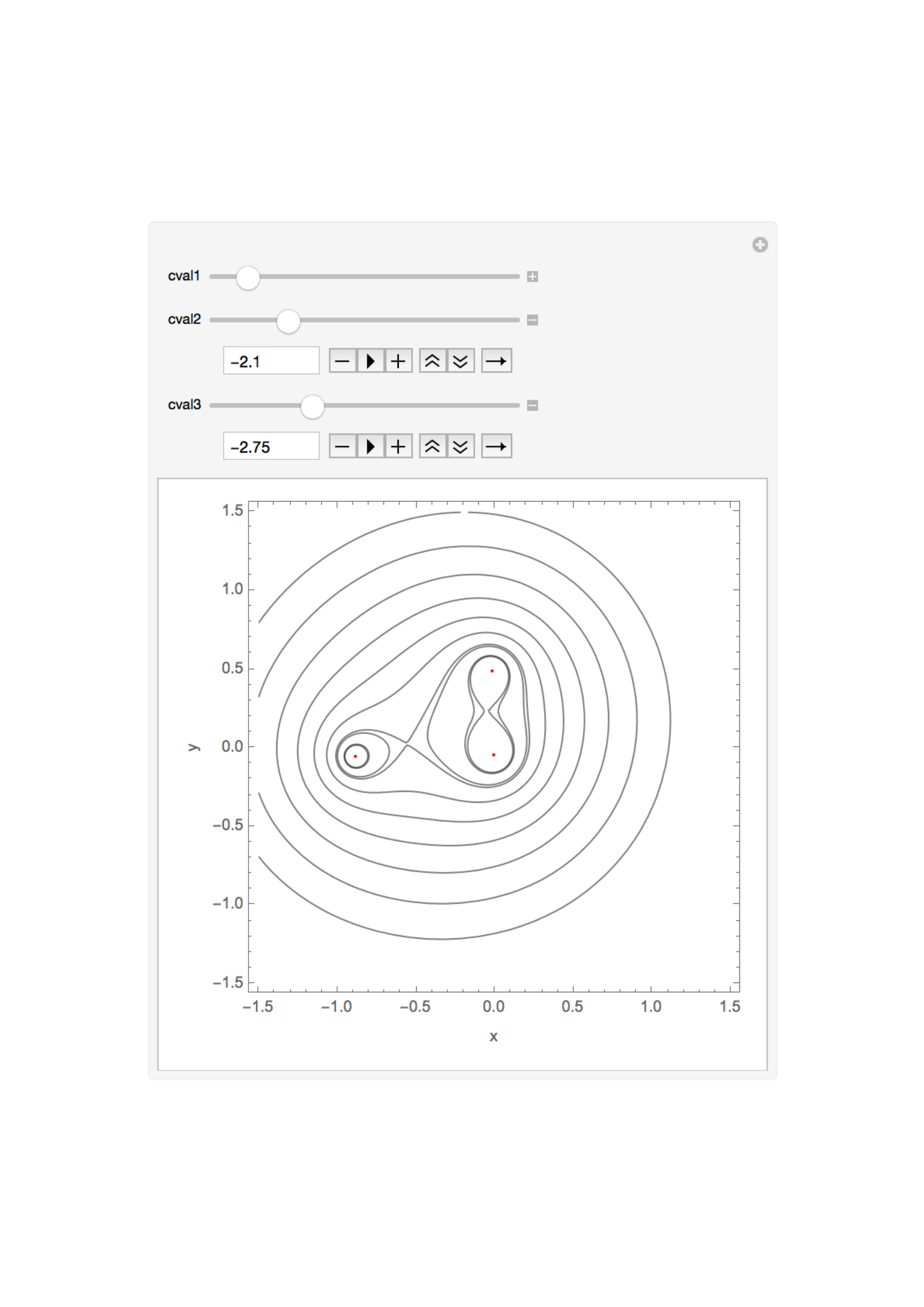}
\caption{Example with three zeros randomly chosen with modulus less than unity.}
\label{fig3z}
\end{figure}

\begin{figure}[htb]
\includegraphics[width=6 in, angle=0]{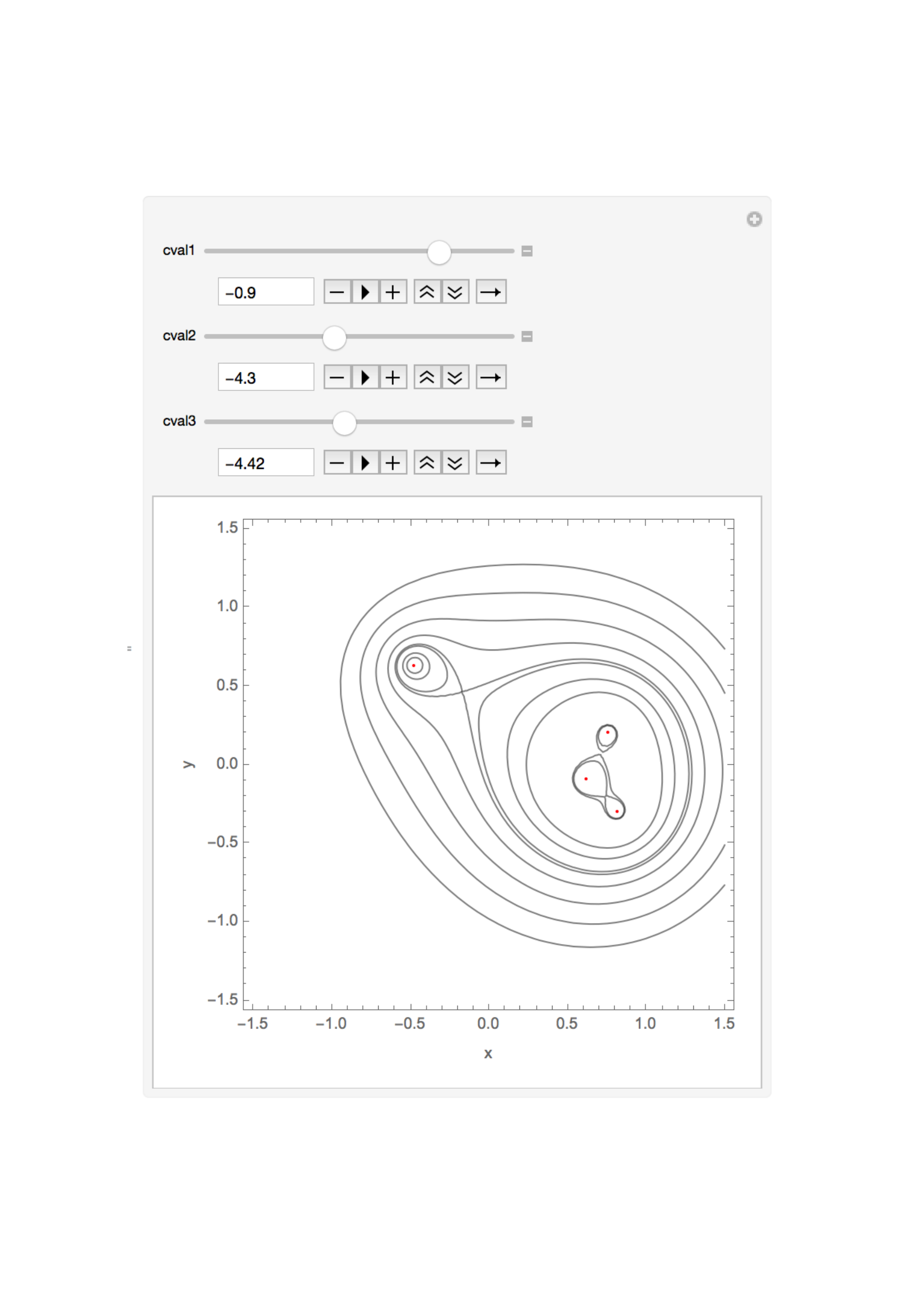}
\caption{Example with four zeros randomly chosen with modulus less than unity.}
\label{fig4z}
\end{figure}

\begin{figure}[htb]
\includegraphics[width=6 in, angle=0]{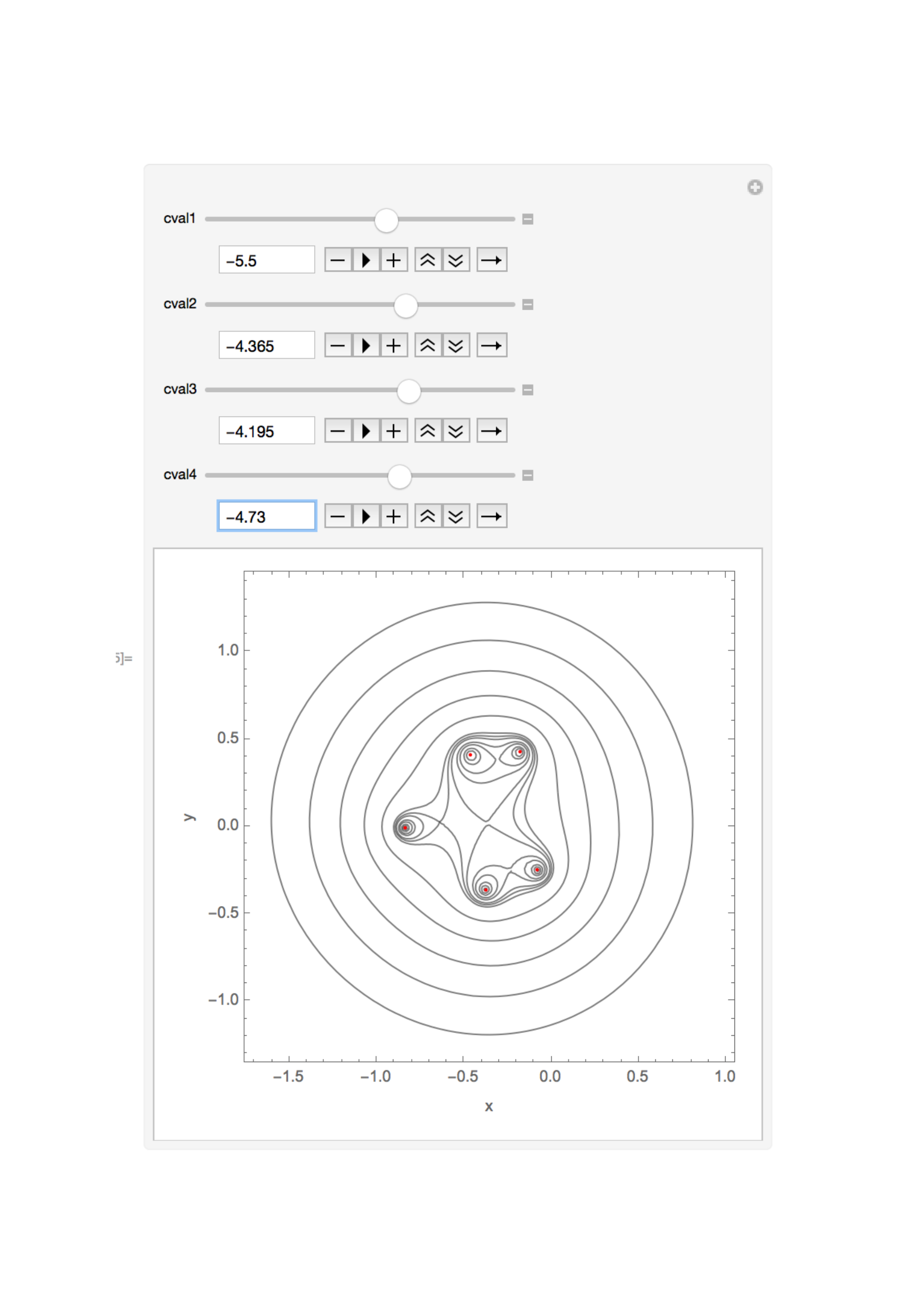}
\caption{Example with five zeros randomly chosen with modulus less than unity.}
\label{fig5z}
\end{figure}

It is interesting that the examples in Figs. \ref{fig3z}-\ref{fig5z} show a clear break up into the clusters of $m-1$ zeros  and 1 zero 
utilised in the proof of Section 2.

\end{document}